\tikzset{font=\small}
\tikzset{main node/.style={circle,fill=white,draw,minimum size=0.1cm,inner sep=0pt},}
\newtheorem{theorem}{Theorem} [section]
\newtheorem{lemma}[theorem]{Lemma}
\newtheorem{proposition}[theorem]{Proposition}
\newtheorem{remark}[theorem]{Remark}
\theoremstyle{definition}
\newcommand{\Cay}{\operatorname{Cay}}
\DeclareMathOperator{\dom}{dom}
\DeclareMathOperator{\ran}{ran}
\subjclass[2010]{
   20M18, %Inverse semigroups
   20M10, %General structure theory for semigroups
   20F05, %Generators, relations, and presentations of groups 
   05E18. %Group actions on combinatorial structures
}
\keywords{inverse monoid, $F$-inverse monoid, Margolis-Meakin expansion, group presentation, Cayley graph of a group, closure operator, dual-closure operator, partial action, partial action product}
\numberwithin{equation}{section}
\begin{document}

\title[A new approach to universal $F$-inverse monoids in enriched signature] { A new approach to universal $F$-inverse monoids in enriched signature}

%%%%
\author{Ganna Kudryavtseva}
\address{University of Ljubljana,
Faculty of Mathematics and Physics, Jadranska ulica 19, SI-1000 Ljubljana, Slovenia / Institute of Mathematics, Physics and Mechanics, Jadranska ulica 19, SI-1000 Ljubljana, Slovenia}
\email{ganna.kudryavtseva@fmf.uni-lj.si}
%%%

%%%%
\author{Ajda Lemut Furlani}
\address{Institute of Mathematics, Physics and Mechanics, Jadranska ulica 19, SI-1000 Ljubljana, Slovenia/ Faculty of Mathematics and Physics, Jadranska ulica 19, SI-1000 Ljubljana, Slovenia}
\email{ajda.lemut@imfm.si}
%%%

\sloppy

\thanks{The authors were supported by the Slovenian Research and Innovation Agency grant~P1-0288.}

\begin{abstract}  
We show that the universal $X$-generated $F$-inverse monoid $F(G)$, where $G$ is an $X$-generated group, introduced by Auinger, Szendrei and the first-named author, arises as a quotient inverse monoid of the Margolis-Meakin expansion $M(G, X\cup \overline{G})$ of $G$, with respect to the extended generating set $X\cup \overline{G}$, where $\overline{G}$ is a bijective copy of $G$ which encodes the $m$-operation in $F(G)$. The construction relies on a certain dual-closure operator on the semilattice of all finite and connected subgraphs containing the origin of the Cayley graph $\Cay(G, X\cup {\overline{G}})$ and leads to a new and simpler proof of the universal property of $F(G)$.
\end{abstract}

\maketitle

\section{Introduction}
An $F$-inverse monoid is an inverse monoid such that every $\sigma$-class, where $\sigma$ is the minimum group congruence, has a maximum element, with respect to the natural partial order. These monoids appear naturally and are useful in various mathematical contexts, see \cite{AKSz21} and references therein; for a solution of the finite $F$-inverse cover problem, see  \cite{ABO22}.

$F$-inverse monoids possess the additional unary operation $s\mapsto m(s)$ assigning to each element $s$ the maximum element $m(s)$ in its $\sigma$-class. It was observed by Kinyon \cite{K18} that $F$-inverse monoids in the enriched signature $(\cdot, \, ^{-1}, m, 1)$ form a variety of algebras. In \cite{AKSz21}, Auinger, Szendrei and the first-named author found a model for the $F$-inverse expansion $F(G)$ (in this paper denoted $F(G,X)$) of an $X$-generated group $G$, which is an upgrade of the Margolis-Meakin expansion $M(G,X)$ of $G$ \cite{MM89}  (for definitions and properties of  $M(G,X)$ and $F(G,X)$, see Subsection \ref{subs:revision}). A special case of this construction, with $G$ being the free $X$-generated group $FG(X)$, is a model of the free $X$-generated $F$-inverse monoid $FFI(X)$. The construction of $F(G,X)$, just as in the case with $M(G,X)$, involves certain subgraphs of the Cayley graph $\Cay(G,X)$ of the $X$-generated group $G$. Its key novel feature is that the requirement of the connectedness of subgraphs under consideration is dropped. The appropriate analogues of paths in $\Cay(G,X)$ are {\em journeys} where, along with traversing edges, it is allowed to jump between vertices, the jumps being captured by the $m$-operation of $F(G,X)$. The proof of the universal property of $F(G,X)$ from \cite{AKSz21} relies on assigning journeys in $\Cay(G,X)$ to terms of a suitable term algebra and evaluating them in $F(G,X)$. The proof is independent of the universal property of $M(G,X)$ and implies the latter, along with the universal property of the Birget-Rhodes expansion $B(G)$ \cite{BR89, Szendrei89} of $G$ (see \cite[Remark 4.8]{AKSz21}).

In this paper, we show that $F(G,X)$ arises as the canonical quotient inverse monoid  $M^{\wedge}(G, Y)$ of the Margolis-Meakin expansion  $M(G, Y)$ of $G$, with respect to the extended set of generators $Y=X\cup \overline{G}$, where $\overline{G}$ is a set in a bijection with $G$ and encodes the $m$-operation in $F(G, X)$. This quotient arises from a suitable $G$-invariant dual-closure operator $j\colon {\mathcal X}_Y\to {\mathcal X}_Y$ on the semilattice ${\mathcal X}_Y$ of all finite and connected subgraphs of $\Cay(G,Y)$, which contain the origin. Note that the underlying order of ${\mathcal X}_Y$ is the anti-inclusion order (see Proposition \ref{prop:universal}) and upon reversing this order our dual-closure operator can be equivalently looked at as a closure operator. We show that $M^{\wedge}(G, Y)$ is an $X$-generated $F$-inverse monoid and is canonically isomorphic to $F(G,X)$ (Propositions \ref{prop:aa} and \ref{prop:isom}), the latter being essentially due to the fact that the gaps in finite and not necessarily connected subgraphs containing the origin of $\Cay(G,X)$ are determined by the edges labeled by $\overline{G}$ in the corresponding  {\em closed} connected subgraphs of $\Cay(G,Y)$. Applying the universal property of $M(G,Y)$, we show in Theorem \ref{th:univ_prop1} that $M^{\wedge}(G, Y)$ has the same universal property as $F(G,X)$, which yields a new and simpler proof of the universal property of $F(G,X)$. Our arguments rely on the structure result for $E$-unitary inverse semigroups in terms of partial actions which is recalled in Subsection \ref{subs:e_unitary}.

When this work was nearly complete, we learned of the preprint version of the paper~\cite{Sz23} by Nora Szak\'{a}cs, which also treats quotients of the Margolis-Meakin expansions arising from closure operators, but with a different purpose. While we use dual-closure operators to give a new perspective on $F(G,X)$, \cite{Sz23} shows an equivalence of categories between certain closure operators and suitable $E$-unitary or $F$-inverse monoids. The work \cite{Sz23} separately considers closure operators on {\em not necessarily connected} subgraphs of $\Cay(G,X)$ (and also an appropriate analogue of Stephen's procedure in inverse monoids), in order to study presentations of $F$-inverse monoids in enriched signature. Our results, together with those of \cite{Sz23}, suggest that this setting can be alternatively handled using {\em connected} subgraphs of $\Cay(G,Y)$ (see Remark \ref{rem:F_inv}).

For the undefined notions in inverse semigroups we refer the reader to \cite{Lawson_book, Petrich}, and in universal algebra to \cite{BS81}.

\section{Preliminaries}\label{sec:prelim}
\subsection{$X$-generated algebraic structures}\label{subs:inv}
We say that a group (or an involutive monoid, or an inverse monoid, or an $F$-inverse monoid) is $X$-{\em generated} via the {\em assignment map} $\iota_S\colon X\to S$ if $S$ is generated by $\iota_S(X)$. A map $\varphi\colon S\to T$ between $X$-generated groups (or involutive monoids, or inverse monoids, or $F$-inverse monoids) is called {\em canonical}, if $\varphi\iota_S = \iota_T$. Let $(X\cup X^{-1})^*$ be the free involutive monoid on
$X\cup X^{-1}$ and $S$ an $X$-generated inverse monoid (in particular a group). For each $u\in (X\cup X^{-1})^*$ by $[u]_S$ (or simply $[u]$ when $S$ is understood) we denote the {\em value} of $u$ in $S$, that is, the image of $u$ under the canonical morphism $(X\cup X^{-1})^*\to S$; if $x\in X$, we have $[x]_S=\iota_S(x)$.

\subsection{Partial group actions and premorphisms}
\label{subs:premor}
Let $\leq$ denote the natural partial order on an inverse monoid.
A {\em premorphism} from a group $G$ to a inverse monoid $S$ is a map $\varphi:G \to S$, such that the following conditions hold:
\begin{itemize}
\item[(PM1)] $\varphi(1)=1$,
\item[(PM2)] $\varphi(g)\varphi(h) \leq \varphi(gh)$, for all $g,h\in G$,
\item[(PM3)] $\varphi(g^{-1}) = \varphi(g)^{-1}$, for all $g\in G$.
\end{itemize}

If $S$ is the symmetric inverse monoid ${\mathcal I}(X)$, we will denote $\varphi(g)(x)$ by $\varphi_g(x)$.
By a {\em partial map} $f\colon A\to B$ from a set $A$ to a set $B$ we mean a map $f\colon C\to B$ where $C\subseteq A$. For $a\in A$ we say that $f(a)$ is {\em defined} if $a\in C$.

Let $G$ be a group and $X$ a (non-empty) set. We say that $G$ {\em acts partially} on $X$ if there exists a partial map $G\times X \to X, \, \,(g,x)\mapsto g\cdot x$, which satisfies the following conditions:
\begin{itemize}
\item[(PA1)] $1\cdot x$ is defined and equals $x$, for all $x \in X$,
\item[(PA2)] if $g\cdot x$ and $g\cdot(h\cdot x)$ are defined, then $gh \cdot x$ is defined and $g\cdot(h\cdot x)=gh \cdot x$, for all $g,h \in G$ and $x\in X$,
\item[(PA3)] if $g\cdot x$ is defined, then $g^{-1}\cdot (g\cdot x)$ is defined and equals $x$, for all $g\in G$ and $x \in X$.
\end{itemize}

A partial action $G\times X\to X$, $(g,x)\mapsto g\cdot x$, gives rise to a premorhpism $\varphi\colon G \to {\mathcal I}(X)$  given by $\varphi_g(x) = g\cdot x$. The notions of a partial action of $G$ on $X$ and of a premorphism $G \to {\mathcal I}(X)$ are easily seen to be equivalent. For more background on partial group actions, we refer the reader to \cite{KL04}; for a comprehensive survey on partial actions to \cite{D19}.
  
\subsection{The structure of $E$-unitary inverse semigroups in terms of partial actions}\label{subs:e_unitary}
Let ${\mathcal P}$ be a poset. A non-empty subset $I$ of $\mathcal{P}$ is said to be an {\em order ideal}, if $x\leq y$ and $y\in I$ imply that  $x \in I$, for all $x,y\in {\mathcal{P}}$. A map $f\colon {\mathcal P}\to {\mathcal Q}$ between posets is called an {\em order isomorphism}, provided that $x\leq y$ if and only if $f(x)\leq f(y)$, for all $x,y\in {\mathcal P}$.

For a semilattice $Y$, by $\Sigma(Y)$ we denote the inverse monoid of all order-isomorphisms between order ideals of $Y$. Partial actions of a group $G$ on $Y$ by order isomorphisms between order ideals correspond to premoprhisms $G\to \Sigma(Y)$.

We now recall the variation of the McAlister structure result \cite{McA74} on $E$-unitary inverse semigroups in terms of partial actions \cite{PR79, KL04}. Suppose that a group $G$ acts partially on a semilattice $Y = (Y,\wedge)$ by order isomorphisms between order ideals and that $\varphi \colon G \to \Sigma(Y)$ is the associated premorphism. On the set
$$
Y \rtimes_\varphi G = \{(e,g) \in Y \times G \colon e \in \ran\varphi_g\}
$$
define the following operations:
$$
(e,g)(f,h)=(\varphi_g(\varphi_{g^{-1}}(e)\wedge f),gh), \,\,
(e,g)^{-1}= (\varphi_{g^{-1}}(e), g^{-1}).
$$
When $\varphi$ is understood, we suppress the index $\varphi$ and denote $Y \rtimes_\varphi G$ by $Y \rtimes G$.
Then $Y \rtimes G$ is an inverse semigroup with  $Y\simeq E(Y \rtimes G)$ via the map $y\mapsto (y,1)$. The natural partial order on it is given by $(e,g)\leq (f,h)$ if and only if $g=h$ and $e\leq f$. For $(e,g), (f,h)\in Y \rtimes G$ we have $(e,g) \mathrel{\sigma} (f,h)$ if and only if $g=h$, so that $Y \rtimes G$ is $E$-unitary and $(Y \rtimes G)/\sigma \simeq G$ via the map $(y,g)\mapsto g$. It is easy to see that $Y\rtimes G$ is a monoid if and only if $Y$ has a top element, $1_Y$, in which case the identity element of $Y\rtimes G$ is $(1_Y, 1)$. Furthermore, for each $E$-unitary inverse semigroup $S$ we have that $S\simeq E(S) \rtimes_{\varphi} S/\sigma$ via the map $s\mapsto (ss^{-1}, [s]_{\sigma})$ where $\varphi$ is the {\em underlying premorphism} of $S$, defined by setting, for all $g\in S/\sigma$, 
$$\dom\varphi_g = \{e \in E(S) \colon \text{there exists } s\in S \text{ with } [s]_{\sigma} = g \text{ such that } e \leq s^{-1}s\},$$
 $\varphi_g(e)=se s^{-1}$, where $e\in \dom\varphi_g$ and $s$ is such that  $[s]_{\sigma} = g$ and $e \leq s^{-1}s$, where $[s]_{\sigma}$ denotes the $\sigma$-class of $s$.

\subsection{Cayley graphs of groups}\label{subs:cayley}
The {\em Cayley graph} $\Cay(G,X)$ of an $X$-generated group $G$ is defined as the oriented graph ${\mathrm{V}}
\sqcup {\mathrm{E}}^+ \sqcup {\mathrm{E}}^- $, where ${\mathrm{V}}=G$ is the set of vertices, ${\mathrm{E}}^+= G \times X$ is the set of {\em positive edges} and ${\mathrm{E}}^-= G \times X^{-1}$ is the set of {\em negative edges}.
We set ${\mathrm{E}}= {\mathrm{E}}^+ \sqcup {\mathrm{E}}^- $.

For convenience, we will denote an edge $(g,x)$ by $(g,x,g[x])$. We let $\alpha(g,x,g[x])=g$, $\omega(g,x,g[x])=g[x]$ and $l(g,x,g[x])=x$ be the {\em beginning},  the {\em end} and the {\em label} of the edge $(g,x,g[x])$. There is the involution ${}^{-1}\colon  {\mathrm{E}}\to {\mathrm{E}}$, defined by $(g,x, g[x])^{-1}=(g[x],x^{-1},g)$. The edge $(g[x],x^{-1},g)$ should be thought of as `the same edge' as $(g,x, g[x])$ but `traversed in the opposite direction'.

A \emph{non-empty path} in $\Cay(G,X)$ is a sequence $e_1e_2\cdots e_n$ ($n\geq 1$) of
edges, for which $\omega(e_i)=\alpha(e_{i+1})$ for all $i\in \{1,\dots, n-1\}$. For the path $p=e_1\cdots e_n$ we set $\alpha(p)=\alpha(e_1)$ and $\omega(p) = \omega(e_n)$. The {\em inverse path} of $p$ is the path $p^{-1}=e_n^{-1}\cdots e_1^{-1}$. The {\em empty path} at a vertex $g$ is denoted by $\varepsilon_g$ and we set $\alpha(\varepsilon_g)=\omega(\varepsilon_g) = g$. Two paths, $p$ in $q$, in $\Cay(G,X)$ are called {\em coterminal} if $\alpha(p)=\alpha(q)$ and $\omega(p)=\omega(q)$.
The {\em label} of the path $p=e_1\cdots e_n$, where $n\geq 1$, is defined by $l(p)=l(e_1)\cdots l(e_n)\in (X\cup X^{-1})^+$ while $l(\varepsilon_g)=1$ for all $g\in G$. 
A {\em subgraph} $\Gamma$ of $\Cay(G,X)$ is a subset $\Gamma \subseteq \Cay(G,X)$, which is closed under $\alpha$, $\omega$ and $^{-1}$. By ${\mathrm{V}}(\Gamma)$ and ${\mathrm{E}}(\Gamma)$ we denote the sets of vertices and edges of the subgraph $\Gamma$. Any subset $P \subseteq \Cay(G,X)$ yields a unique subgraph of $\Cay(G,X)$ called the {\em subgraph spanned by $P$} and denoted by $\langle P \rangle$. If $p$ is a path in $\Cay(G,X)$, then the graph $\langle p \rangle$ spanned by $p$ is defined as the graph spanned by the edges of $p$.
Note that if $P$ is finite, so is $\langle P \rangle$. A subgraph $\Gamma$ is {\em connected}, if for any two vertices $u,v\in \Gamma$ there exists a path $p$ in $\Gamma$ which begins in $u$ and ends in $v$.

\subsection{The universal inverse monoid $M(G,X)$ and $F$-inverse monoid $F(G,X)$ of an $X$-generated group $G$}\label{subs:revision}
Let $G$ be an $X$-generated group and $\Cay(G,X)$ its Cayley graph. We introduce the following notation:
\begin{itemize}
\item ${\mathcal X}_{X}$ -- the set of all finite connected subgraphs of $\Cay(G,X)$ which contain the origin,
\item ${\tilde{\mathcal X}}_{X}$ -- the set of all finite (and not necessarily connected) subgraphs of $\Cay(G,X)$ which contain the origin.
\end{itemize}
These are semilattices with $A \leq B$ if and only if $A\supseteq B$; their top element is the graph $\Gamma_1$ with only one vertex, $1$, and no edges.
We put:
\begin{align*}
M(G,X) & = \{(\Gamma,g)\colon \Gamma \in {\mathcal X}_{X} \text{ and } g\in {\mathrm{V}}(\Gamma)\},\\
F(G,X) & = \{(\Gamma,g)\colon \Gamma \in {\tilde{\mathcal X}}_{X} \text{ and } g\in {\mathrm{V}}(\Gamma)\}
\end{align*}
and define the operations on $M(G,X)$ and $F(G,X)$ by
$$
(A,g)(B,h) = (A\cup gB, gh), \,\,\, (A,g)^{-1} = (g^{-1}A, g^{-1}).
$$
Then $M(G,X)$ is an $E$-unitary inverse monoid called the {\em Margolis-Meakin expansion} of the $X$-generated group $G$ \cite{MM89}, and $F(G,X)$ is the $F$-inverse monoid $F(G)$ introduced by Auinger, Szendrei and the first-named author in \cite{AKSz21}. 
In the following proposition we collect some properties of $M(G,X)$ and $F(G,X)$.
\begin{proposition} \label{prop:universal}\cite{MM89, AKSz21} Let $G$ be an $X$-generated group.
\begin{enumerate}
\item \label{i:p1} The identity element of each of $M(G,X)$ and $F(G,X)$ is $(\Gamma_1,1)$.
\item \label{i:p2}  $M(G,X)$ is an $X$-generated inverse monoid and $F(G,X)$ is an $X$-generated $F$-inverse monoid 
via the assignment map $x\mapsto (\Gamma_x, [x])$, where $\Gamma_x$ is the graph with two vertices, $1$ and $[x]$, and the positive edge $(1,x,[x])$.
\item \label{i:p3} The natural partial order on each of $M(G,X)$ and $F(G,X)$ is given by $(A,g)\leq (B,h)$ if and only if $g=h$ and $B\subseteq A$.
\item \label{i:p4} Let $(A,g), (B,h)$ be elements of one of $M(G,X)$ or $F(G,X)$. Then $(A,g) \mathrel{\sigma} (B,h)$ if and only if $g=h$, which implies that $M(G,X)/\sigma \simeq G$ and  $F(G,X)/\sigma \simeq G$ via the canonical morphism $(\Gamma, g)\mapsto g$.
\item \label{i:p5} In $F(G,X)$ the maximum element of the ${\mathcal{\sigma}}$-class of $(A,g)$ is $(\{1,g\},g)$ where $\{1,g\}$ is the graph with vertices $1,g$ and no edges.
\item \label{i:p6} \begin{itemize}
\item   $M(G,X)= {\mathcal X}_X \rtimes G$, where the underlying premorphisms $\varphi\colon G\to \Sigma({\mathcal X}_{X})$ is given, for each $g\in G$, by $\dom\varphi_g = \{\Gamma \in {\mathcal X}_{X}\colon  g^{-1}\in {\mathrm{V}}(\Gamma)\}$ and $\varphi_g(\Gamma) = g\Gamma$ for all $\Gamma\in \dom\varphi_g$.
\item $F(G,X)  = {\tilde{\mathcal X}}_{X} \rtimes G$, where the underlying premorphism $\tilde\varphi\colon G\to \Sigma(\tilde{\mathcal X}_{X})$ is given similarly as above for $M(G,X)$, with ${\mathcal X}_{X}$ replaced by ${\tilde{\mathcal X}}_{X}$.
\end{itemize}
\item (Universal properties of $M(G,X)$ and $F(G,X)$) \label{i:p7} Let $S$ be an $X$-generated $E$-unitary inverse monoid
(respectively, an $X$-generated $F$-inverse monoid) such that there is a canonical morphism
$\nu\colon G\to S/\sigma$. Then there is a canonical morphism $\varphi\colon M(G,X) \to S$ (respectively, $F(G,X)\to S$) such that the following diagram of canonical morphisms of $X$-generated inverse monoids (respectively, of $X$-generated $F$-inverse monoids) commutes:
\begin{center}
 \begin{tikzcd}
 U(G,X) \arrow[r, "\varphi"] \arrow[d] & S \arrow[d]\\
 G\arrow[r, "\nu"] & S/\sigma
 \end{tikzcd}
\end{center}
where $U(G,X)$ is one of $M(G,X)$ or $F(G,X)$.
\item \label{i:p8} Let $p$ be a path from $1$ to $g$ in $\Cay(G,X)$. Then $[l(p)]_{M(G,X)}=(\langle p\rangle,g)$.
\item \label{i:p9} Let $p$ and $q$ be paths in $\Cay(G,X)$. Then  $[l(p)]_{M(G,X)} \mathrel{\sigma} [l(q)]_{M(G,X)}$ if and only if $[l(p)]_G = [l(q)]_G$. 
\end{enumerate}
\end{proposition}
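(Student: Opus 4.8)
The plan is to derive the order-theoretic parts from the partial action product machinery of Subsection~\ref{subs:e_unitary}, and to treat the universal property~\ref{i:p7} on its own. I would begin with part~\ref{i:p6}, since~\ref{i:p1},~\ref{i:p3} and~\ref{i:p4} follow at once from it. For $M(G,X)$ one checks that $\varphi\colon G\to\Sigma({\mathcal X}_X)$ given by $\dom\varphi_g=\{\Gamma\in{\mathcal X}_X\colon g^{-1}\in{\mathrm V}(\Gamma)\}$ and $\varphi_g(\Gamma)=g\Gamma$ is a well-defined premorphism: each $\dom\varphi_g$ is an order ideal of $({\mathcal X}_X,\supseteq)$ because enlarging a graph only adds vertices; the condition $g^{-1}\in{\mathrm V}(\Gamma)$ forces $1=g\cdot g^{-1}\in{\mathrm V}(g\Gamma)$, so $g\Gamma\in{\mathcal X}_X$; and left translation is an order isomorphism onto $\ran\varphi_g=\dom\varphi_{g^{-1}}$. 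Axioms (PM1)--(PM3) are then routine, the only subtlety being the inclusion $\dom(\varphi_g\varphi_h)\subseteq\dom\varphi_{gh}$ needed for (PM2). Since the meet in $({\mathcal X}_X,\supseteq)$ is union, substituting into the formulas defining ${\mathcal X}_X\rtimes_\varphi G$ reproduces exactly $(A,g)(B,h)=(A\cup gB,gh)$ and $(A,g)^{-1}=(g^{-1}A,g^{-1})$, so $M(G,X)={\mathcal X}_X\rtimes_\varphi G$. The general theory then yields the identity $(\Gamma_1,1)$ (as $\Gamma_1$ is the top of ${\mathcal X}_X$), the partial order of~\ref{i:p3}, $E$-unitarity, and $M(G,X)/\sigma\simeq G$. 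None of these verifications uses connectedness, so the identical computation with ${\mathcal X}_X$ replaced by ${\tilde{\mathcal X}}_X$ gives $F(G,X)={\tilde{\mathcal X}}_X\rtimes_{\tilde\varphi}G$ and the corresponding instances of~\ref{i:p1},~\ref{i:p3},~\ref{i:p4}.

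Next I would record the path statements~\ref{i:p8} and~\ref{i:p9} and use them for~\ref{i:p2} and~\ref{i:p5}. Part~\ref{i:p8} is an induction on the length of $p$: the empty path yields $(\Gamma_1,1)$, and appending an edge $(g,x,g[x])$ amounts to right multiplication by $(\Gamma_x,[x])$, since $g\Gamma_x$ is precisely that edge, so the spanned graph grows by one edge while the endpoint moves to $g[x]$; the negative case is symmetric. Part~\ref{i:p9} then follows from~\ref{i:p8} and~\ref{i:p4}, as the second coordinate of $[l(p)]_{M(G,X)}$ is $[l(p)]_G$. For~\ref{i:p2}, every $(\Gamma,g)$ equals $[l(p)]_{M(G,X)}$ for a walk $p$ that traverses all edges of the finite connected graph $\Gamma$ and terminates at $g$, so the $(\Gamma_x,[x])$ generate $M(G,X)$. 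For~\ref{i:p5}, inside a fixed $\sigma$-class $\{(\Gamma,g)\colon g\in{\mathrm V}(\Gamma)\}$ the order of~\ref{i:p3} makes the greatest element the one with the inclusion-smallest graph; because connectedness is dropped in ${\tilde{\mathcal X}}_X$, the edgeless graph $\{1,g\}$ is admissible and is that smallest graph, giving maximum $(\{1,g\},g)$. This shows $F(G,X)$ is $F$-inverse; it is moreover $X$-generated as an $F$-inverse monoid, since the generators build every connected idempotent $(\Gamma,1)$ as above, while applying $m$ to a word of value $h$ and multiplying by its inverse yields the isolated-vertex idempotent $(\{1,h\},1)$, and products of such idempotents produce every $(\Gamma,1)$ with $\Gamma\in{\tilde{\mathcal X}}_X$.

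The real work is the universal property~\ref{i:p7}. Writing $U(G,X)$ for $M(G,X)$, I would set $\varphi([u]_{U(G,X)})=[u]_S$ for $u\in(X\cup X^{-1})^*$; by~\ref{i:p2} and~\ref{i:p8} every element has this form, and such a map is automatically a canonical morphism making the square commute, so the entire issue is well-definedness: $[u]_{U(G,X)}=[v]_{U(G,X)}$ must imply $[u]_S=[v]_S$. By~\ref{i:p8} the hypothesis says $[u]_G=[v]_G$ and $\langle p_u\rangle=\langle p_v\rangle$; since $\nu$ is canonical the first equality gives $[u]_S\mathrel{\sigma}[v]_S$, and as $S$ is $E$-unitary the isomorphism $S\simeq E(S)\rtimes S/\sigma$ shows two $\sigma$-related elements agree once their idempotent parts do. Everything thus reduces to the key lemma that $[u]_S[u]_S^{-1}$ depends only on $\langle p_u\rangle$, and this is the step I expect to be the main obstacle. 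I would prove it from the product formula in the $P$-representation: if $\psi$ is the underlying premorphism of $S$ and $[u]_S=[a_1]_S\cdots[a_n]_S$ with vertices $v_i=[a_1\cdots a_i]_G$, then expanding the product in $E(S)\rtimes S/\sigma$ gives $[u]_S[u]_S^{-1}=\bigwedge_{i=1}^{n}\psi_{\nu(v_{i-1})}\big([a_i]_S[a_i]_S^{-1}\big)$. Here the $i$-th factor $\lambda\big((v_{i-1},a_i,v_i)\big):=\psi_{\nu(v_{i-1})}([a_i]_S[a_i]_S^{-1})$ depends only on the traversed edge of $\Cay(G,X)$, and a short computation with (PM2) shows $\lambda(e)=\lambda(e^{-1})$; hence $[u]_S[u]_S^{-1}=\bigwedge_{e\in{\mathrm E}(\langle p_u\rangle)}\lambda(e)$ is a function of $\langle p_u\rangle$ alone, as required. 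For the $F(G,X)$ case $S$ is $F$-inverse, hence $E$-unitary, and the same scheme applies to terms in the enriched signature: one additionally sends $m(A,g)=(\{1,g\},g)$ to $m$ of its image, and the idempotent formula acquires extra vertex factors $\mu(h)$ coming from $m$, each depending only on $h\in G$, so that $[u]_S[u]_S^{-1}$ depends only on the, now possibly disconnected, graph $\langle p_u\rangle\in{\tilde{\mathcal X}}_X$.
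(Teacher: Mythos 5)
The paper itself offers no proof of this proposition: it is recalled from \cite{MM89, AKSz21}, with only the remark that item \eqref{i:p6} is folklore. So the comparison is really against the cited sources. Your treatment of items \eqref{i:p1}--\eqref{i:p6}, \eqref{i:p8} and \eqref{i:p9} is sound and is the standard route: verify the premorphism axioms, note that the meet in $({\mathcal X}_X,\supseteq)$ is union so that the partial action product reproduces the stated operations, read off \eqref{i:p1}, \eqref{i:p3}, \eqref{i:p4} from the general theory of Subsection 2.3, and deduce \eqref{i:p2}, \eqref{i:p5}, \eqref{i:p9} from the induction proving \eqref{i:p8}.

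Item \eqref{i:p7} is where your argument has a genuine gap, and it sits exactly at the step you flag as the main obstacle. First, the expression $\psi_{\nu(v_{i-1})}\bigl([a_i]_S[a_i]_S^{-1}\bigr)$ need not be defined: $\dom\psi_g$ consists of those idempotents $e$ with $e\leq s^{-1}s$ for some $s$ over $g$, and already for $S=M(G,X)$ itself the idempotent $[x]_S[x]_S^{-1}=(\Gamma_x,1)$ lies in $\dom\psi_{\nu(h)}$ only when $h^{-1}\in\{1,[x]\}$. Second, the factors that actually occur in the expansion of $[u]_S[u]_S^{-1}$ are $t_{i-1}[a_i]_S[a_i]_S^{-1}t_{i-1}^{-1}$, where $t_{i-1}$ is the value in $S$ of the traversed prefix, and these depend on the prefix and not only on the edge of $\Cay(G,X)$: in the free inverse monoid on $\{x\}$ (with $G=\mathbb{Z}$) the words $x$ and $x^{-1}xx$ both traverse the edge $(1,x,[x])$, yet the corresponding factors are $xx^{-1}$ and $x^{-1}xxx^{-1}$, which are distinct. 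Hence the identity $[u]_S[u]_S^{-1}=\bigwedge_{e\in{\mathrm E}(\langle p_u\rangle)}\lambda(e)$ with $\lambda$ a function of edges alone is not established, and proving that the total meet depends only on the edge set of $\langle p_u\rangle$ is essentially the entire content of the Margolis--Meakin universal property; it requires a separate argument, e.g.\ the lemma that coterminal paths $p,q$ from $1$ with $\langle q\rangle\subseteq\langle p\rangle$ satisfy $[l(p)]_S\leq [l(q)]_S$. In the $F$-inverse case your idea can be repaired: conjugating by the canonical maximal elements $\tau_S(\nu(v_{i-1}))$, which do depend only on the vertex, and using the identity $tet^{-1}=tt^{-1}\wedge TeT^{-1}$ for $t\leq T$, the prefix contributions telescope away (this is close in spirit to the journey evaluation of \cite{AKSz21} and to the proof of Theorem \ref{th:univ_prop1} in this paper). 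But a general $E$-unitary $S$ has no such canonical choice of representatives, so for $M(G,X)$ the key lemma remains unproved as written.
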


We remark that the results in \eqref{i:p6} above are not explicitly stated in \cite{MM89, AKSz21}, but easily follow from the results therein and are known.

\section{Dual-closure operators on semilattices and quotients of partial action products}\label{sec:closure}
Recall that a {\em dual-closure operator}  (or an {\em interior operator}) on a poset \mbox{$({\mathcal P},\leq)$} is a map $j\colon {\mathcal P}\to {\mathcal P}$, which satisfies the following conditions, for all $x,y\in {\mathcal P}$:
\begin{enumerate}
\item[(Cl1)] $j(x)\leq x$,
\item[(Cl2)] if $x\leq y$ then $j(x)\leq j(y)$,
\item[(Cl3)] $j(j(x)) = j(x)$.
\end{enumerate}

From now on let $X$ be a semilattice and $j\colon X\to X$ a dual-closure operator on it.

\begin{lemma}\label{lem:closure1} 
For all $x,y\in X$ we have $j(j(x)\wedge j(y)) = j(x\wedge y)$.
\end{lemma}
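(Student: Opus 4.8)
The plan is to prove the identity $j(j(x)\wedge j(y)) = j(x\wedge y)$ by establishing both inequalities, exploiting that $j$ is idempotent, order-preserving, and decreasing. Since the two expressions live in a semilattice, I would aim to show $j(j(x)\wedge j(y)) \leq j(x\wedge y)$ and $j(x\wedge y) \leq j(j(x)\wedge j(y))$ separately, then invoke antisymmetry of $\leq$.

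First I would prove the easy direction, $j(j(x)\wedge j(y)) \leq j(x\wedge y)$. By (Cl1) we have $j(x)\leq x$ and $j(y)\leq y$, so $j(x)\wedge j(y)\leq x\wedge y$ (the meet is monotone in each argument, a standard semilattice fact). Applying (Cl2) to this inequality yields $j(j(x)\wedge j(y))\leq j(x\wedge y)$ directly.

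For the reverse direction, $j(x\wedge y)\leq j(j(x)\wedge j(y))$, I would argue as follows. Since $x\wedge y\leq x$, monotonicity (Cl2) gives $j(x\wedge y)\leq j(x)$, and symmetrically $j(x\wedge y)\leq j(y)$; hence $j(x\wedge y)\leq j(x)\wedge j(y)$ by the universal property of the meet. Now apply (Cl2) once more to get $j(j(x\wedge y))\leq j(j(x)\wedge j(y))$. Finally, idempotency (Cl3) collapses the left-hand side: $j(j(x\wedge y)) = j(x\wedge y)$, which delivers $j(x\wedge y)\leq j(j(x)\wedge j(y))$.

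I do not anticipate a serious obstacle here; the statement is a formal consequence of the three dual-closure axioms together with the basic monotonicity of the meet operation, and the whole argument is a short diagram chase of inequalities. The only point requiring minor care is the implicit use of the fact that $a\leq c$ and $b\leq d$ imply $a\wedge b\leq c\wedge d$ in a semilattice, and that meets satisfy the universal property used to combine two upper bounds; both are standard and can be cited or used without comment. The key conceptual observation is that (Cl3) is exactly what is needed to undo the extra application of $j$ generated when pushing the inner meet of closed elements back down through the outer $j$.
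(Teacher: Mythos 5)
Your proof is correct and follows essentially the same route as the paper's: the forward inequality via (Cl1) and monotonicity of the meet followed by (Cl2), and the reverse inequality by bounding $j(x\wedge y)$ below $j(x)\wedge j(y)$, applying (Cl2), and collapsing with (Cl3). No gaps.
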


\begin{proof} 
Since $j(x)\leq x$ and $j(y)\leq y$ by (Cl1), we have $j(x)\wedge j(y)\leq x\wedge y$. From (Cl2), it follows that $j(j(x)\wedge j(y))\leq j(x\wedge y)$.  For the opposite inequality observe that, since $x\wedge y\leq x$, we have $j(x\wedge y)\leq j(x)$, by (Cl2), and similarly $j(x\wedge y)\leq j(y)$. Hence $j(x\wedge y)\leq j(x)\wedge j(y)$. Applying (Cl2), we write $j(j(x\wedge y))\leq j(j(x)\wedge j(y))$ which, in view of (Cl3), yields $j(x\wedge y)\leq j(j(x)\wedge j(y))$, as required.
\end{proof}

We define the equivalence relation ${\rho_{j}}$ on $X$ by
$$
x \mathrel{\rho_{j}} y \iff j(x)=j(y).
$$
It is easy to see that $\mathrel{\rho_{j}}$ is a congruence on $X$.  It will be convenient to identify the quotient semilattice $X/\rho_{j}$  with the semilattice $(j(X), \bar\wedge)$ with the operation $j(x) \, \bar\wedge \, j(y) = j(x\wedge y)$.

Let $G$ be a group acting partially on the semilattice $X$ by order isomorphisms between order ideals.
We say that a dual-closure operator $j$ on $X$ is {\em $G$-invariant} provided that if $g\cdot x$ is defined then $j(g\cdot x) = g \cdot j(x)$, for all $g\in G$ and $x\in X$.  If $j$ is a $G$-invariant dual-closure operator on $X$, define the relation $\tilde{\rho}_{j}$ on the inverse semigroup $X \rtimes G$ by 
\begin{equation}\label{eq:rho_tilde}
(e,g) \mathrel{\tilde{\rho}_{j}} (f,h) \iff g=h \text{ and } j(e)=j(f).
\end{equation}

\begin{proposition}\label{prop:congruence} 
The relation $\tilde{\rho}_{j}$ is a congruence on $X \rtimes G$, which is contained in $\sigma$. Moreover $(X \rtimes G)/{\tilde{\rho}_{j}} \simeq j(X) \rtimes G$.
\end{proposition}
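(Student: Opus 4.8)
The containment $\tilde\rho_j \subseteq \sigma$ is immediate: by definition $(e,g) \mathrel{\tilde\rho_j} (f,h)$ forces $g = h$, and Subsection~\ref{subs:e_unitary} records that $\sigma$ on $X \rtimes G$ is exactly equality of the group coordinate. For the remaining two assertions the plan is to fold them into one statement by exhibiting a surjective homomorphism
$$
\Phi\colon X \rtimes G \to j(X) \rtimes G, \qquad (e,g) \mapsto (j(e), g),
$$
whose kernel congruence is precisely $\tilde\rho_j$. Since the kernel of any semigroup homomorphism is a congruence, this proves that $\tilde\rho_j$ is a congruence, and the homomorphism theorem then yields $(X \rtimes G)/\tilde\rho_j \simeq j(X) \rtimes G$. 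One could instead verify compatibility of $\tilde\rho_j$ with product and inverse directly, using $G$-invariance for the inverse and combining $G$-invariance with Lemma~\ref{lem:closure1} for the product; the single homomorphism $\Phi$ accomplishes both and simultaneously identifies the quotient.

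Before $\Phi$ has meaning I must make sense of $j(X) \rtimes G$, that is, produce a partial action of $G$ on the semilattice $(j(X), \bar\wedge)$ — which is a semilattice precisely by Lemma~\ref{lem:closure1} — by order isomorphisms between order ideals. The plan is to restrict $\varphi$. The key observation is that $G$-invariance makes $j(X)$ invariant under the action: if $y \in j(X)$ and $g \cdot y$ is defined, then $g \cdot y = g \cdot j(y) = j(g \cdot y) \in j(X)$. Hence $\varphi_g$ restricts to a bijection $\psi_g$ of $\dom\varphi_g \cap j(X)$ onto $\ran\varphi_g \cap j(X)$. I would then check that the semilattice order of $j(X)$ coincides with the restriction of that of $X$ (a short consequence of Lemma~\ref{lem:closure1}), so that these intersections are order ideals of $j(X)$ and each $\psi_g$ is an order isomorphism, and that the family $(\psi_g)_{g \in G}$ inherits (PA1)--(PA3) from $(\varphi_g)_{g \in G}$. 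This produces a bona fide partial action product $j(X) \rtimes G$.

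With $j(X) \rtimes G$ in place, I would verify the three properties of $\Phi$. Well-definedness: if $e \in \ran\varphi_g$ then $j(e) = \varphi_g(j(\varphi_{g^{-1}}(e)))$ by $G$-invariance, so $j(e) \in \ran\psi_g$ and $(j(e),g) \in j(X) \rtimes G$. Homomorphism: for the product one computes, using $\psi_{g^{-1}}(j(e)) = \varphi_{g^{-1}}(j(e)) = j(\varphi_{g^{-1}}(e))$ (by $G$-invariance), the definition $j(a) \bar\wedge j(b) = j(a \wedge b)$ of the meet on $j(X)$, and once more $G$-invariance to pull $j$ through $\varphi_g$, that
$$
\Phi(e,g)\,\Phi(f,h) = \bigl(j(\varphi_g(\varphi_{g^{-1}}(e) \wedge f)),\, gh\bigr) = \Phi\bigl((e,g)(f,h)\bigr);
$$
compatibility with inverses reduces to the single identity $j(\varphi_{g^{-1}}(e)) = \varphi_{g^{-1}}(j(e))$, again $G$-invariance. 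Surjectivity is clear, since every element of $j(X) \rtimes G$ has the form $(y,g)$ with $y \in j(X)$, whence $j(y) = y$ and $\Phi(y,g) = (y,g)$.

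Finally the kernel: $\Phi(e,g) = \Phi(f,h)$ holds if and only if $g = h$ and $j(e) = j(f)$, which is exactly $(e,g) \mathrel{\tilde\rho_j} (f,h)$. This settles all three claims. The step demanding the most care is the middle one, namely checking that the restricted family $(\psi_g)_{g \in G}$ is genuinely a partial action of $G$ on $j(X)$ by order isomorphisms between order ideals — in particular that the restricted domains and ranges are order ideals of $j(X)$, which hinges on the coincidence of the two orders — since only after this is $j(X) \rtimes G$, and hence the codomain of $\Phi$, even defined. The algebraic identities for $\Phi$ themselves are routine once $G$-invariance and Lemma~\ref{lem:closure1} are in hand.
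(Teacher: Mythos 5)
Your proposal is correct. It differs from the paper's proof mainly in organization: the paper first verifies directly that $\tilde\rho_j$ is compatible with multiplication on both sides (the computation $j(\varphi_g(\varphi_{g^{-1}}(e)\wedge d)) = \varphi_g(j(\varphi_{g^{-1}}(j(e))\wedge j(d)))$, using $G$-invariance and Lemma~\ref{lem:closure1}), and only then defines the map $[(e,g)]_{\tilde\rho_j}\mapsto (j(e),g)$ on the quotient, declaring it routine to check that it is an isomorphism. You instead define $\Phi(e,g)=(j(e),g)$ on all of $X\rtimes G$, prove it is a surjective homomorphism onto $j(X)\rtimes G$, and read off both the congruence property and the isomorphism from the homomorphism theorem. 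The underlying computation is the same in both cases, but your packaging buys two things: you need only one compatibility check instead of separate left and right ones, and the well-definedness of the quotient map is automatic. You also make explicit a point the paper leaves implicit, namely that $j(X)\rtimes G$ is only meaningful once one has checked that $\varphi$ restricts to a partial action of $G$ on $(j(X),\bar\wedge)$ by order isomorphisms between order ideals; your verification (invariance of $j(X)$ under the partial action, coincidence of the order of $(j(X),\bar\wedge)$ with the restriction of the order of $X$ via Lemma~\ref{lem:closure1}, and inheritance of (PA1)--(PA3)) is correct and fills a small gap in the exposition rather than creating one.
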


\begin{proof}
Obviously, $\tilde{\rho}_{j}$ is an equivalence relation, contained in $\sigma$. Suppose that $(e,g) \mathrel{\tilde{\rho}_{j}} (f,h)$. Then
$g=h$ and $j(e) = j(f)$. Let $(d,s) \in X \rtimes G$ and show that $(e,g)(d,s) \mathrel{\tilde{\rho}_{j}} (f,h)(d,s)$ or, equivalently, $(\varphi_g(\varphi_{g^{-1}}(e)\wedge d),gs) \mathrel{\tilde{\rho}_{j}}(\varphi_g(\varphi_{g^{-1}}(f)\wedge d),gs)$. It suffices to show that 
$j(\varphi_g(\varphi_{g^{-1}}(e)\wedge d)) = j(\varphi_g(\varphi_{g^{-1}}(f)\wedge d))$. The left-hand side rewrites to
$$
\varphi_g(j(\varphi_{g^{-1}}(e)\wedge d)) = \varphi_g ( j(\varphi_{g^{-1}}(e)) \bar\wedge j(d)) =  \varphi_g(j(\varphi_{g^{-1}}(j(e))\wedge j(d)))$$ 
and, similarly, the right-hand side to $\varphi_g(j( \varphi_{g^{-1}}(j(f))\wedge j(d)))$. Since $j(e)=j(f)$, the two expressions coincide. Likewise, one shows that  $(d,s)(e,g) \mathrel{\tilde{\rho_j}} (d,s)(f,h)$.
The map $(X \rtimes G)/{\tilde{\rho}_{j}}\to j(X) \rtimes G$ given by $[(e,g)]_{\tilde{\rho}_{j}} \mapsto (j(e), g)$, is obviously well defined, and it is routine to check that it is an isomorphism of semigroups.
\end{proof}

\section{A new approach to the universal $F$-inverse monoid $F(G,X)$}\label{sec:new_approach}
\subsection{The inverse monoid $M(G,Y)$.} \label{subs:m}
Let $X$ be a nonempty set and $G$ an $X$-generated group. In what follows, we will need to consider $G$ also with respect to another generating set, so to distinguish between the assignment maps for different generating sets, we will denote the assignment map $X\to G$ by $\iota_{G,X}$. Recall that we abbreviate $\iota_{G,X}(x)$ by $[x]$. Let, further, $\overline{G}$ be a disjoint copy of $G$, and we fix the bijection $g\mapsto \overline{g}$ between $G$ and $\overline{G}$.

We will consider the group $G$ also with respect to the `extended' generating set $Y=X\cup \overline{G}$ via the asisgnment map $\iota_{G,Y} \colon Y \to G$  given by 
\begin{equation}\label{eq:fgx}
\begin{aligned}
x & \mapsto [x],  && \text{ if }  x\in X,  \\
\overline{g} & \mapsto  g, && \text{ if }  \overline{g}\in \overline{G}.
\end{aligned}
\end{equation}
 
In particular, for all $x\in X$ we have $\iota_{G,Y}(x) = \iota_{G,X}(x) = [x]$.

For $\overline{g}\in \overline{G}$ we define $ \Gamma_{\overline{g}}$ to be the graph with two vertices, $1$ and $g$, and one positive edge $(1,\overline{g},g)$.
Then the inverse monoid $M(G,Y)= {\mathcal X}_{Y} \rtimes G$  is $Y$-generated via the  map $\iota_{M(G,Y)} \colon Y\to M(G,Y)$ given by
\begin{equation}\label{eq:def_gamma}
\begin{aligned}
x \mapsto (\Gamma_x, [x]),  && \text{ if }  x\in X,  \\
\overline{g} \mapsto (\Gamma_{\overline{g}},g), && \text{ if }  \overline{g}\in \overline{G},
\end{aligned}
\end{equation}
and its identity element is $(\Gamma_1, 1)$.

\subsection{The $F$-inverse monoid $M^{\wedge}(G,Y)$}\label{subs:m_wedge}
We call a subgraph $\Gamma\in {\mathcal X}_Y$ {\em closed} provided that it satisfies the following condition:
\begin{enumerate}
\item[(C)] If $a,b\in G$ are such that $a, b\in {\mathrm{V}}(\Gamma)$, then  $(a,\overline{a^{-1}b}, b)\in  {\mathrm{E}}(\Gamma)$.
\end{enumerate}

For $\Gamma \in {\mathcal X}_Y$ we put $\Gamma^{\wedge}$ to be smallest closed graph in ${\mathcal X}_Y$ which contains $\Gamma$. It is clearly well defined and we have ${\mathrm{V}}(\Gamma^{\wedge}) = {\mathrm{V}}(\Gamma)$ and ${\mathrm{E}}^+(\Gamma^{\wedge}) = {\mathrm{E}}^+(\Gamma) \cup \{(a,\overline{a^{-1}b}, b)\colon a,b \in {\mathrm{V}}(\Gamma)\}$. 
It is easy to see that $j\colon {\mathcal X}_Y\to {\mathcal X}_Y$, $\Gamma\mapsto \Gamma^{\wedge}$, is a $G$-invariant dual-closure operator.
We put ${\mathcal X}_Y^{\wedge} = j({\mathcal X}_Y)$.

The congruence $\tilde{\rho_j}$ of \eqref{eq:rho_tilde} on $M(G,Y)={\mathcal X}_{Y} \rtimes G$ is given by
\begin{equation}\label{eq:tilde_rho_j}
(A, g) \mathrel{\tilde\rho_j} (B,h) \iff g=h \text{ and } A^{\wedge} = B^{\wedge}.
\end{equation}

By Proposition \ref{prop:congruence} the quotient inverse monoid $M(G,Y)/\tilde{\rho_j}$  is isomorphic to ${\mathcal X}_{Y}^{\wedge} \rtimes G$, which we  denote by $M^{\wedge}(G,Y)$. This is a $Y$-generated  inverse monoid via the assignment map 
\begin{equation}\label{eq:def_gamma1}
\begin{aligned}
x \mapsto (\Gamma_x^{\wedge}, [x]),  && \text{ if }  x\in X,  \\
\overline{g} \mapsto (\Gamma_{\overline{g}}^{\wedge},g), && \text{ if }  \overline{g}\in \overline{G},
\end{aligned}
\end{equation}
and its identity element is $ (\Gamma^{\wedge}_{1},1) =(\Gamma_{\overline{1}},1)$. The operations on it are given by
\begin{equation}\label{eq:prod_sg}
(A,g)(B,h) = ((A\cup gB)^{\wedge}, gh),    
\end{equation}
\begin{equation}\label{eq:inv_sg}
(A,g)^{-1} = (g^{-1}A, g^{-1}).    
\end{equation}

\begin{remark}
{\em It is not hard to show that the congruence $\tilde{\rho}_j$ is generated by the relations $\overline{[x]}\geq x$, $\overline{gh} \geq \overline{g}\,\overline{h}$ and $\overline{g^{-1}} = \overline{g}^{-1}$, where $x\in X$, $g,h\in G$, but we will not use this fact in our arguments.}
\end{remark}

\begin{proposition} \label{prop:aa} $M^{\wedge}(G,Y)$ is an $X$-generated (in the enriched signature $(\cdot, ^{-1}, m, 1)$) $F$-inverse monoid with 
$m(\Gamma, g) = (\Gamma_{\overline{g}}^{\wedge},g)$, for all $(\Gamma, g)\in M^{\wedge}(G,Y)$.
\end{proposition}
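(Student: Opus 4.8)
## Proof Proposal

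The plan is to verify that $M^{\wedge}(G,Y)$ satisfies the defining property of an $F$-inverse monoid in the enriched signature: that the proposed operation $m$ genuinely picks out, in each $\sigma$-class, the maximum element with respect to the natural partial order, and that this element is idempotent-free in the right sense (i.e., the monoid is $F$-inverse and $m$ is the $m$-operation). Recall from Proposition~\ref{prop:congruence} that $\tilde\rho_j \subseteq \sigma$, so the $\sigma$-classes of $M^{\wedge}(G,Y)$ are the images of the $\sigma$-classes of $M(G,Y)$; by Proposition~\ref{prop:universal}\eqref{i:p4} (applied to the $Y$-generating set) two elements $(A,g),(B,h)$ of $M^{\wedge}(G,Y)$ are $\sigma$-related if and only if $g=h$. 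Thus the $\sigma$-class of $(\Gamma,g)$ consists of all pairs $(B,g)$ with $B\in{\mathcal X}_Y^{\wedge}$ and $g\in{\mathrm V}(B)$.

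First I would identify the maximum element of this $\sigma$-class. By Proposition~\ref{prop:universal}\eqref{i:p3} the natural partial order is $(A,g)\leq(B,g)$ iff $B\subseteq A$; since the order on ${\mathcal X}_Y$ (and hence on ${\mathcal X}_Y^{\wedge}$) is reverse inclusion, the maximum element of the $\sigma$-class is the one whose graph $B$ is \emph{minimal} under ordinary set inclusion among all closed graphs in ${\mathcal X}_Y$ containing both $1$ and $g$ as vertices. The candidate is precisely $\Gamma_{\overline g}^{\wedge}$: the graph $\Gamma_{\overline g}$ has vertices $\{1,g\}$ and the single positive edge $(1,\overline g,g)$, and its closure $\Gamma_{\overline g}^{\wedge}$ adds only the edges forced by condition (C) on the vertex set $\{1,g\}$, namely the $\overline{G}$-labeled edges between $1$ and $g$. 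I would argue that any closed graph $B$ containing $1$ and $g$ must contain all these forced edges, hence $\Gamma_{\overline g}^{\wedge}\subseteq B$, giving $(\Gamma_{\overline g}^{\wedge},g)\geq(B,g)$; and conversely $(\Gamma_{\overline g}^{\wedge},g)$ itself lies in the class since $g\in{\mathrm V}(\Gamma_{\overline g}^{\wedge})$. This establishes that each $\sigma$-class has a maximum, so $M^{\wedge}(G,Y)$ is $F$-inverse and that $m(\Gamma,g)=(\Gamma_{\overline g}^{\wedge},g)$.

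Next I would address the enriched-signature claim, which has two parts. One must check that the unary operation $m$ so defined is a genuine operation on the abstract algebra (i.e. well defined on $\tilde\rho_j$-classes): if $(\Gamma,g)$ and $(\Gamma',g')$ are the same element of $M^{\wedge}(G,Y)$ then $g=g'$, so $m$ depends only on the $\sigma$-class and well-definedness is immediate. Then I must verify that $M^{\wedge}(G,Y)$ is \emph{$X$-generated} in the enriched signature — that is, every element is obtainable from the images $\iota(x)=(\Gamma_x^{\wedge},[x])$, $x\in X$, using $\cdot$, $^{-1}$, $1$, and $m$, \emph{without} needing the generators $\overline g$. The key observation here is that the generator $\overline g$ is redundant: since $m$ recovers $\Gamma_{\overline g}^{\wedge}$ from any word of value $g$, and since any $g\in G$ is a value of some word $u\in(X\cup X^{-1})^*$ (as $G$ is $X$-generated), we have $(\Gamma_{\overline g}^{\wedge},g)=m([u])$ for that word. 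Hence the image of $\iota_{M^{\wedge}(G,Y)}(\overline g)$ lies in the enriched $X$-subalgebra, and since the $Y$-generators suffice for the plain signature, the $X$-generators suffice for the enriched one.

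The main obstacle I anticipate is the minimality argument for $\Gamma_{\overline g}^{\wedge}$: one must be careful that the closure operation does not force additional vertices (it does not, since ${\mathrm V}(\Gamma^{\wedge})={\mathrm V}(\Gamma)$ by the explicit description preceding the proposition) and that every closed supergraph on vertex set $\supseteq\{1,g\}$ really contains the specific edges of $\Gamma_{\overline g}^{\wedge}$ — this follows directly from condition (C) applied to the pair $a=1$, $b=g$ (and $a=g$, $b=1$). The remaining verifications are routine once these structural facts are in hand; the genuine content lies in recognizing that closure under (C) exactly forces the $\overline{G}$-edges that realize the gaps in the corresponding not-necessarily-connected picture of $F(G,X)$, which is why $\Gamma_{\overline g}^{\wedge}$ is the correct maximum.
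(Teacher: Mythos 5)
Your proposal is correct and follows essentially the same route as the paper: identify $(\Gamma_{\overline g}^{\wedge},g)$ as the maximum of its $\sigma$-class via the minimality (under inclusion) of $\Gamma_{\overline g}^{\wedge}$ among closed graphs containing $1$ and $g$, and then obtain $X$-generation in the enriched signature by writing $(\Gamma_{\overline g}^{\wedge},g)=m\bigl((\Gamma_{x_1}^{\wedge},[x_1])^{\varepsilon_1}\cdots(\Gamma_{x_n}^{\wedge},[x_n])^{\varepsilon_n}\bigr)$ for a word representing $g$, which makes the $\overline G$-generators redundant. Your version merely spells out in more detail the steps the paper declares ``clear,'' so there is nothing to correct.
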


\begin{proof} 
The second claim is clear by the description of $\sigma$ and the fact that $\Gamma_{\overline{g}}^{\wedge}$ is minimum among all the graphs in ${\mathcal X}^{\wedge}_Y$, which have $1$ and $g$ as vertices. For the first claim, it suffices to show that each $(\Gamma_{\overline{g}}^{\wedge}, g)$, where $g$ runs through $G\setminus \{1\}$, can be written via the generators $(\Gamma^{\wedge}_x, [x])$ in the enriched signature. We write $g=[x_1^{\varepsilon_1}\cdots x_n^{\varepsilon_n}]$, where $n\geq 1$, all $x_i\in X$ and $\varepsilon_i\in \{1,-1\}$, then $(\Gamma_{\overline{g}}^{\wedge}, g) = m((\Gamma_{x_1}^{\wedge},[x_1])^{\varepsilon_1}\cdots (\Gamma_{x_n}^{\wedge},[x_n])^{\varepsilon_n})$.
\end{proof}

\begin{proposition}\label{prop:isom} The $X$-generated $F$-inverse monoids $M^{\wedge}(G,Y)$ and $F(G,X)$ are canonically isomorphic.
\end{proposition}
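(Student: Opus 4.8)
The plan is to exploit the partial-action-product descriptions $F(G,X) = \tilde{\mathcal X}_X \rtimes G$ (Proposition \ref{prop:universal}\eqref{i:p6}) and $M^\wedge(G,Y) = \mathcal X_Y^\wedge \rtimes G$ (Proposition \ref{prop:congruence}), reducing the statement to producing a $G$-equivariant isomorphism of the underlying semilattices. Concretely, I would construct mutually inverse maps
\[
\Phi\colon \tilde{\mathcal X}_X \to \mathcal X_Y^\wedge, \qquad \Psi\colon \mathcal X_Y^\wedge \to \tilde{\mathcal X}_X,
\]
where $\Phi(\Gamma)$ keeps the vertices and $X$-labelled edges of $\Gamma$ and adjoins every $\overline G$-edge $(a,\overline{a^{-1}b},b)$ with $a,b\in{\mathrm V}(\Gamma)$, while $\Psi(\Delta)$ deletes all $\overline G$-edges of $\Delta$ but retains all of its vertices. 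Once $\Phi$ is shown to be a $G$-equivariant semilattice isomorphism matching the assignment maps, the map $(\Gamma,g)\mapsto(\Phi(\Gamma),g)$ induces the desired isomorphism of partial action products $\tilde{\mathcal X}_X\rtimes G \cong \mathcal X_Y^\wedge\rtimes G$, via the $\rtimes$-construction recalled in Subsection \ref{subs:e_unitary}.

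First I would check that both maps are well defined: $\Phi(\Gamma)$ is finite, contains the origin, is connected (any two vertices are joined by an adjoined $\overline G$-edge) and satisfies (C) by construction, hence lies in $\mathcal X_Y^\wedge$; and $\Psi(\Delta)$ is a finite (possibly disconnected) subgraph of $\Cay(G,X)$ containing the origin. The crucial point is that $\Phi$ and $\Psi$ are mutually inverse, and here closedness does the work: for $\Delta\in\mathcal X_Y^\wedge$ condition (C) guarantees that $\Delta$ contains \emph{exactly} the $\overline G$-edges between its vertices, so deleting and then re-adjoining them recovers $\Delta$; conversely $\Psi\Phi(\Gamma)=\Gamma$ since $\Gamma$ has no $\overline G$-edges to begin with. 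In other words, in a closed graph the $\overline G$-edges carry no information beyond the vertex set, which is precisely the mechanism by which the gaps of a disconnected graph in $\tilde{\mathcal X}_X$ are encoded.

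Next I would verify that $\Phi$ is a semilattice morphism and is $G$-equivariant. The meet in $\tilde{\mathcal X}_X$ is union, while the meet in $\mathcal X_Y^\wedge$ is $A\,\bar\wedge\,B=(A\cup B)^\wedge$; so the identity to establish is $\Phi(A\cup B)=(\Phi(A)\cup\Phi(B))^\wedge$. This holds because both sides have vertex set ${\mathrm V}(A)\cup{\mathrm V}(B)$, the same $X$-edges, and all $\overline G$-edges between those vertices — on the right, the closure is exactly what supplies the $\overline G$-edges running between ${\mathrm V}(A)$ and ${\mathrm V}(B)$ that the plain union omits. For equivariance, the partial action on each semilattice is left translation, with $\varphi_g$ (respectively $\tilde\varphi_g$) defined precisely when $g^{-1}$ is a vertex; since ${\mathrm V}(\Phi(\Gamma))={\mathrm V}(\Gamma)$ the domains correspond, and $\Phi(g\Gamma)=g\Phi(\Gamma)$ because translating by $g$ carries ``all $\overline G$-edges on ${\mathrm V}(\Gamma)$'' to ``all $\overline G$-edges on $g{\mathrm V}(\Gamma)$''. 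One then obtains the inverse-monoid isomorphism $(\Gamma,g)\mapsto(\Phi(\Gamma),g)$ (equivalently, one checks the formulas \eqref{eq:prod_sg}--\eqref{eq:inv_sg} directly, using $\Phi(A\cup gB)=(\Phi(A)\cup g\Phi(B))^\wedge$).

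Finally, canonicity: the generator $(\Gamma_x,[x])$ of $F(G,X)$ is sent to $(\Phi(\Gamma_x),[x])=(\Gamma_x^\wedge,[x])$, which is the assignment-map image \eqref{eq:def_gamma1} of $x$ in $M^\wedge(G,Y)$, so the isomorphism commutes with the two $X$-assignment maps. Because $m(s)$ is the maximum of the $\sigma$-class of $s$ and is therefore determined by the inverse-monoid structure, any isomorphism of inverse monoids between $F$-inverse monoids automatically preserves $m$; hence the map is an isomorphism in the enriched signature. I expect the main obstacle to be the clean verification of the identity $\Phi(A\cup B)=(\Phi(A)\cup\Phi(B))^\wedge$ — isolating that the sole role of the closure is to reinstate the inter-component $\overline G$-edges — since this is the precise sense in which disconnected subgraphs of $\Cay(G,X)$ correspond to closed connected subgraphs of $\Cay(G,Y)$.
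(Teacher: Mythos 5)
Your proposal is correct and follows essentially the same route as the paper: the paper defines the single map ${\bf f}(\Gamma)=\Gamma\cap\Cay(G,X)$ (your $\Psi$), notes it is bijective precisely because a closed graph is recovered from its $X$-part by re-adjoining all $\overline{G}$-edges between its vertices (your $\Phi$), and then checks the semilattice and $G$-equivariance properties to conclude via the $\rtimes$-construction, exactly as you do. The only cosmetic difference is the direction in which the primary map is written down.
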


\begin{proof}
Define the map ${\bf f}\colon {\mathcal X}^{\wedge}_{Y} \to {\tilde{\mathcal X}}_{X}$ by ${\bf f}(\Gamma) = \Gamma \cap \Cay(G,X)$ where $\Gamma \cap \Cay(G,X)$ is the graph obtained from $\Gamma$ by erasing all its edges labeled by $\overline{G}$ and their inverse edges labeled by ${\overline{G}}^{-1}$. This map is injective as $\Gamma$ can be reconstructed from $\Gamma \cap \Cay(G,X)$ by adding to the latter all the edges of $\Cay(G,Y)$ labeled by $\overline{G}$ and ${\overline{G}}^{-1}$ between its vertices. It is clearly surjective. Since, in addition, 
$$(\Gamma_1 \cap \Cay(G,X)) \cup (\Gamma_2 \cap \Cay(G,X)) = (\Gamma_1\cup \Gamma_2)^{\wedge} \cap \Cay(G,X),$$ 
for all $\Gamma_1, \Gamma_2 \in  {\mathcal X}_{Y}^\wedge$,  it is an isomorphism of semilattices. It is immediate that ${\bf f}$ respects the partial action of $G$, that is, for all $g\in G$ and $\Gamma \in {\mathcal X}_{Y}^{ \wedge}$ we have that $\varphi_g(\Gamma)$ is defined if and only if so is $\tilde{\varphi}_g({\bf f}(\Gamma))$, in which case we have ${\bf f}(\varphi_g(\Gamma)) = \tilde{\varphi}_g({\bf f}(\Gamma))$. Here $\varphi\colon G\to \Sigma({\mathcal X}_{Y}^{\wedge})$ and $\tilde{\varphi} \colon G\to \Sigma({\tilde{\mathcal X}}_{X})$ are the underlying premorphisms of $M^{\wedge}(G,Y)$ and $F(G,X)$ (see part \eqref{i:p6} of Proposition \ref{prop:universal}). It now easily follows that the map
$M^{\wedge}(G,Y) = {\mathcal X}^{\wedge}_{Y} \rtimes G \to 
{\tilde{\mathcal X}}_{X} \rtimes G = F(G,X)$, given by $(\Gamma,g) \mapsto ({\bf f}( \Gamma), g)$, is an isomorphism of $F$-inverse monoids. That it is canonical is immediate by the construction.
\end{proof}

We now prove the universal property of $M^{\wedge}(G,Y)$.
\begin{theorem}\label{th:univ_prop1} For any $X$-generated group $G$ and any $X$-generated $F$-inverse monoid $F$ (in the signature $(\cdot, ^{-1}, m, 1)$) such that there is a canonical morphism
$\nu\colon G\to F/\sigma$, there is a canonical morphism $\varphi\colon M^{\wedge}(G,Y) \to F$ such that the diagram of canonical morphisms of $X$-generated $F$-inverse monoids
\begin{center}
 \begin{tikzcd}
 M^{\wedge}(G,Y) \arrow[r, "\varphi"] \arrow[d] & F \arrow[d]\\
 G\arrow[r, "\nu"] & F/\sigma
 \end{tikzcd}
\end{center}
commutes. 
\end{theorem}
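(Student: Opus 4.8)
The plan is to transport the universal property of the ordinary Margolis-Meakin expansion $M(G,Y)$ (Proposition~\ref{prop:universal}\eqref{i:p7}) across the quotient map $M(G,Y)\to M^{\wedge}(G,Y)$, using the fact that $F$ is in particular an $E$-unitary inverse monoid. First I would observe that the given $F$-inverse monoid $F$ is $X$-generated, but to apply the universal property of $M(G,Y)$ I must view $F$ as a $Y$-generated \emph{inverse} monoid. The natural choice is to extend the assignment map on $X$ to all of $Y=X\cup\overline{G}$ by sending each generator $\overline{g}$ to $m$ applied to any word representing $g$; concretely, writing $g=[x_1^{\varepsilon_1}\cdots x_n^{\varepsilon_n}]_G$, I set $\iota_F(\overline{g}) = m\bigl(\iota_F(x_1)^{\varepsilon_1}\cdots \iota_F(x_n)^{\varepsilon_n}\bigr)$. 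The first thing to check is that this is well defined, i.e.\ independent of the chosen word: this follows because in any $F$-inverse monoid $m(s)$ depends only on the $\sigma$-class of $s$, and the $\sigma$-class of $\iota_F(x_1)^{\varepsilon_1}\cdots\iota_F(x_n)^{\varepsilon_n}$ is $\nu(g)$ by the canonicity of $\nu$. With this assignment $F$ becomes a $Y$-generated $E$-unitary inverse monoid equipped with a canonical morphism $G\to F/\sigma$ (namely $\nu$, now regarded via the generating set $Y$, which sends $\overline{g}\mapsto \nu(g)=[\iota_F(\overline{g})]_\sigma$).

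Next I would invoke Proposition~\ref{prop:universal}\eqref{i:p7} for the inverse monoid $M(G,Y)$ to obtain a canonical morphism of $Y$-generated inverse monoids $\psi\colon M(G,Y)\to F$ making the analogous square commute. The crucial step is then to show that $\psi$ factors through the congruence $\tilde\rho_j$ of \eqref{eq:tilde_rho_j}, i.e.\ that $(A,g)\mathrel{\tilde\rho_j}(B,h)$ implies $\psi(A,g)=\psi(B,h)$. Since $\psi$ is a canonical morphism between $E$-unitary inverse monoids with $\sigma$-quotient maps compatible via $\nu$, the element $\psi(A,g)$ is determined by $g$ together with the idempotent $\psi(A A^{-1})$, and using the journey/path description (Proposition~\ref{prop:universal}\eqref{i:p8}) one sees that $\psi(A,g)$ is expressible through the values $\iota_F(y)$ along edges of $A$. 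The content to verify is that adding a single \emph{closure edge} $(a,\overline{a^{-1}b},b)$ to $A$ does not change the image under $\psi$: this edge corresponds to the generator $\overline{a^{-1}b}$ whose value $\iota_F(\overline{a^{-1}b})=m(\cdots)$ is, by the defining property of $F$-inverse monoids, the \emph{maximum} element of its $\sigma$-class and hence lies \emph{above} any edge-value traversing from $a$ to $b$; adjoining something above what is already present does not alter the infimum of the idempotent part. Thus $A^{\wedge}=B^{\wedge}$ forces $\psi(A,g)=\psi(B,h)$.

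Having established that $\psi$ respects $\tilde\rho_j$, the universal property of the quotient yields the induced canonical morphism $\varphi\colon M^{\wedge}(G,Y)=M(G,Y)/\tilde\rho_j\to F$, and commutativity of the desired square follows from commutativity for $\psi$ together with $M^{\wedge}(G,Y)/\sigma\simeq G$ (which holds since $\tilde\rho_j\subseteq\sigma$). Finally I would check that $\varphi$ is a morphism of $F$-inverse monoids, i.e.\ that it commutes with the $m$-operation: using the explicit formula $m(\Gamma,g)=(\Gamma_{\overline{g}}^{\wedge},g)$ from Proposition~\ref{prop:aa}, one computes $\varphi(m(\Gamma,g))=\varphi(\Gamma_{\overline{g}}^{\wedge},g)=\iota_F(\overline{g})=m\bigl(\varphi(\Gamma,g)\bigr)$, the last equality because $\iota_F(\overline{g})$ is the maximum of its $\sigma$-class $\nu(g)$ and $\varphi(\Gamma,g)$ lies in that same class.

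I expect the main obstacle to be the factorization step, namely proving cleanly that $\psi$ is constant on $\tilde\rho_j$-classes. The delicate point is translating the combinatorial statement ``$A^\wedge=B^\wedge$'' into the algebraic statement about idempotents in $F$, which rests on the interpretation of the closure edges labeled by $\overline{G}$ as \emph{maximal} $\sigma$-class representatives in the target $F$-inverse monoid; making precise that adjoining a closure edge leaves $\psi(AA^{-1})$ unchanged — equivalently that the idempotent contributed by such an edge is $\geq$ the idempotent already present along any path joining its endpoints in $A^\wedge$ — is where the $F$-inverse (as opposed to merely $E$-unitary) structure of $F$ is genuinely used, and it is the heart of why the extended generating set $Y$ encodes exactly the $m$-operation.
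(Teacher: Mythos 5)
Your proposal is correct and follows essentially the same route as the paper: extend the generating set of $F$ to $Y$ by sending $\overline{g}$ to the maximum element of the $\sigma$-class $\nu(g)$, pull back the universal property of $M(G,Y)$ to get $\psi\colon M(G,Y)\to F$, and show $\psi$ kills $\tilde\rho_j$ because each closure edge evaluates to a maximal $\sigma$-class representative dominating the value of any path (which exists by connectedness) joining its endpoints. The paper makes your key factorization step precise by comparing the labels $sut$ and $s\overline{g}u^{-1}ut$ of two coterminal spanning paths and computing $[\overline{g}]_F[u]_F^{-1}[u]_F=[u]_F$, which is exactly the ``adjoining something above what is already present changes nothing'' argument you sketch.
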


\begin{proof}
Since in this proof, for some algebras $A$ (which are groups,  inverse monoids or $F$-inverse monoids) we work with two generating sets, $X$ and $Y$, we denote the corresponding assignment maps by $\iota_{A,X}$ and $\iota_{A,Y}$, respectively. Because $F$ is an $X$-generated $F$-inverse monoid, it is an $(X \cup \{m(s)\colon s\in F\})$-generated inverse monoid (this is easy to show and known, see \cite[Section 3]{AKSz21}). Let $\tau_F\colon F/\sigma \to F$ be the map which assigns to each $f\in F/\sigma$ the maximum element $\tau_F(f)$ of the $\sigma$-class of $F$ which projects onto $f$. Then $F$ is a $Y$-generated inverse monoid via the assignment map $\iota_{F,Y}\colon Y\to F$, such that $\iota_{F,Y}$ coincides with $\iota_{F,X}$ on $X$ and $\iota_{F,Y}(\overline{g}) = \tau_F\nu(g)$ for $\overline{g}\in \overline{G}$. By the universal property of $M(G,Y)$ (see part \eqref{i:p7} of Proposition \ref{prop:universal}), there is a canonical morphism of $Y$-generated inverse monoids $\psi\colon M(G,Y) \to F$, such that the following diagram of canonical morphisms of $Y$-generated inverse monoids commutes:
\begin{center}
\begin{tikzcd}
M(G,Y) \arrow[r, "\psi"] \arrow[d] & F \arrow[d]\\
G\arrow[r, "\nu"] & F/\sigma
\end{tikzcd}
\end{center}

We show that $\psi\colon M(G,Y) \to F$ factors through the canonical quotient map $\pi\colon M(G,Y)\to M^{\wedge}(G,Y)$, as is illustrated below: 
\begin{center}
$$\begin{tikzcd}
{M(G,Y)} \\
& {M^{\wedge}(G,Y)} & F \\
& G & {F/\sigma}
\arrow["\pi", from=1-1, to=2-2]
\arrow["\varphi", from=2-2, to=2-3]
\arrow[bend right=16, from=1-1, to=3-2]
\arrow[from=2-3, to=3-3]
\arrow["\nu", from=3-2, to=3-3]
\arrow["\psi", bend left=14, from=1-1, to=2-3]
\arrow[from=2-2, to=3-2]
\end{tikzcd}
$$
\end{center}
In view of \eqref{eq:tilde_rho_j} and since $M^{\wedge}(G,Y) \simeq M(G,Y)/\tilde{\rho}_j$, it suffices to show that $\psi(A,g) = \psi(A^{\wedge},g)$ for all $(A,g)\in M(G,Y)$. Since the graph $A^{\wedge}$ is obtained from the graph $A$ by adding to it finitely many edges, there is a finite sequence $A=A_0, A_1,\dots, A_n=A^{\wedge}$ of graphs in ${\mathcal X}_{Y}$ such that, for each $i=0,\dots, n-1$, the graph $A_{i+1}$ is obtained from the graph $A_i$ by adding to it a single positive edge $(a,\overline{g}, ag)$ (and also its inverse negative edge). 
It thus suffices to prove that $\psi(B,g) = \psi(C,g)$ where the graph $C$ is obtained from the graph $B$ by adding to it a single positive edge $e=(a,\overline{g}, ag)$ (and also its inverse negative edge) between $a,ag\in {\mathrm{V}}(B)$. Since $B$ is connected, there is a path, $p$, in $B$ with $\alpha(p)=\alpha(e)$ and $\omega(p) = \omega(e)$. Let $w'ew''$ be a spanning path in $C$ from the origin to $g$. 
Then the path $w=w'pw''$ spans $B$ and the path
$\tilde{w}=w'ep^{-1}pw''$ spans $C$,  moreover, $w$ and $\tilde{w}$ are coterminal from the origin to $g$. Let $s,t, u\in (Y\cup Y^{-1})^*$ be the labels of $w'$, $w''$ and $p$, respectively. Then  $l = sut$ and $\tilde{l} = s\overline{g}u^{-1}ut$ are the labels of $w$ and $\tilde{w}$, respectively. 

Since $(B,g)$ (respectively, $(C,g)$) equals the value in $M(G,Y)$ of the label of any path in $\Cay(G,Y)$ which spans $B$ (respectively, $C$) from the origin to $g$ (by part \eqref{i:p8} of Proposition \ref{prop:universal}), we have that $\psi(B,g) = [l]_F$ and $\psi(C,g) = [\tilde{l}]_F$ (the evaluations are taken in the $Y$-generated inverse monoid $F$). We then have
$\psi(B,g)= [s]_F[u]_F[t]_F$ and $\psi(C,g) = [s]_F[\overline{g}]_F[u]_F^{-1}[u]_F[t]_F$.
But $[u]_{M(G,Y)} \mathrel{\sigma} [\overline{g}]_{M(G,Y)}$ as $p$ and $e$ are coterminal (by part \eqref{i:p9} of Proposition \ref{prop:universal}), moreover, $[u]_{F} \leq [\overline{g}]_{F}$ as $[\overline{g}]_{F} = \tau_F\nu(g)$ is the maximum element in its $\sigma$-class. It follows that $[\overline{g}]_F[u]_F^{-1}[u]_F = [u]_F$, which implies the desired equality $\psi(B,g)= \psi(C,g)$.
Therefore, there is a well defined canonical morphism of $Y$-generated inverse monoids $\varphi\colon M^{\wedge}(G,Y)\to F$ such that 
$\varphi\pi = \psi$. Since  $[\overline{g}]_F = \varphi( \Gamma_{\overline{g}}^{\wedge},g)$, for all $g\in G$, it follows that $\varphi$ preserves the $m$-operation, and is thus a canonical morphism of $X$-generated $F$-inverse monoids.
\end{proof}

Theorem \ref{th:univ_prop1} and Propositions \ref{prop:aa} and \ref{prop:isom} provide a new proof of the universal property of $F(G,X)$. 

\begin{remark}\label{rem:F_inv}
{\em Combining our results with those of \cite{Sz23}, one can show that any $X$-generated $F$-inverse monoid $F$ (in the enriched signature $(\cdot, \, ^{-1}, m, 1)$), looked at as a $Y$-generated inverse monoid (as in the proof of Theorem \ref{th:univ_prop1}), arises as a canonical quotient of $M(G,Y)$, where $G=F/\sigma$, and generators from $\overline{G}$ are mapped onto respective maximal elements of $\sigma$-classes of $F$. This suggests that presentations of $F$-inverse monoids in enriched signature can be studied by the usual tools (Stephen's procedure \cite{S90}) developed for inverse monoids.} 
\end{remark}

\section*{Acknowlegements} We thank the referee for useful comments.

\end{document}